\def\vp{\varphi}
\def\be{\begin{equation}}
\def\ee{\end{equation}}
\newtheorem{theorem}{Theorem} [section]
\newtheorem{lemma}[theorem]{Lemma}
\theoremstyle{remark}
\def\be{\begin{equation}}
\def\ee{\end{equation}}
\def\vp{\varphi}
\begin{document}

\allowdisplaybreaks
\title[]
{On Pleijel's Nodal Domain Theorem}
\author
{J.~Bourgain}
\address
{Institute for Advanced Study, Princeton, NJ 08540}
\email
{bourgain@math.ias.edu}
\thanks{This work was partially supported by NSF grant DMS-1301619}

\begin{abstract}
A slight improvement of Pleijel's estimate on the number of nodal domains is
obtained, exploiting a refinement of the Faber-Krahn inequality and packing
density of discs.
\end{abstract}
\maketitle

\allowdisplaybreaks

\section
{Introduction}

It is proved in \cite {10} that if $\Omega\subset\mathbb R^2$ is a membrane with fixed boundary and \break
$\lambda_1\leq \lambda_2\leq\cdots\leq \lambda_n\leq \cdots$ the Dirichlet spectrum of $\Omega$,
then the number $N=N_n$ of nodal domains of the eigenfunction $\vp =\vp_{\lambda_n}$ satisfies the inequality
\be\label{1}
\underset {n\to\infty}{\hbox{$\lim \sup$}}\  \frac Nn \leq\Big(\frac 2j\Big)^2 =0.691...
\ee
with $j=2.4048...$ being the smallest positive zero of the Bessel function $J_0$.
Note that Pleijel's constant is in fact quite close to the best possible one, because on a rectangle one obtains
 $\frac 2\pi = 0,636\cdots .$
It was in fact conjectured that the constant $\frac 2\pi$ is optimal (cf. \cite{11}).

While it was already pointed out in \cite{11} that Pleijel's inequality is not sharp,
 the purpose of this note is to carry out a refinement of Pleijel's argument that leads to an explicit small 
improvement of \eqref{1}.
Although the gain over \eqref {1} is minuscule and by itself of little interest, we feel that the argument deserves 
some attention and may have further developments. Note also that combining the approach of \cite{11} with the techniques from the present paper,
one gets a similar improvement in Pleijel's constant as stated in the proposition below, for piecewise analytic domains with Neumann boundary condition.

 The underlying idea is very simple and based on two ingredients.

The first is the stability property of the Faber-Krahn inequality and the other is the packing density of discs.

Let us briefly review the reasoning leading to \eqref {1}.

Denote $\Omega_1, \ldots, \Omega_N$ the nodal domains of $\vp=\vp_{\lambda_n}$.
Then, for each $\alpha =1, \ldots, N$, we have that
\be\label{2}
\lambda_n\geq \lambda_1(\Omega_\alpha).
\ee
According to Faber-Krahn inequality,
\be
\label{3}
\lambda_1(\Omega_\alpha)\geq \lambda_1 (\mathcal D_\alpha)= \frac {\pi\lambda_1(\mathcal D)}{|\Omega_\alpha|},
\ee
where $\mathcal D$ is the unit disc and $\mathcal D_\alpha$ a disc of same area as $\Omega_\alpha$.
Note that $\lambda_1(\Omega_\alpha)> \lambda_1(\mathcal D_\alpha)$ unless $\Omega_\alpha$ is itself a disc, which is the stability property 
brought up earlier, with explicit analytical formulations in terms of Fraenkel asymmetry of Hausdorff distance.
We will rely on Lemma 2.1 stated below, which is an extension of a result from \cite{4} for domains that are not
necessarily simply connected.

Since $\lambda_1(\mathcal D)=j^2, j =$ smallest positive zero of $J_0$, \eqref {2}, \eqref {3} imply
\be\label{4}
\lambda_n|\Omega_\alpha|\geq j^2\pi.
\ee
Summing \eqref{4} over $\alpha =1, \ldots, N$ and observing that obviously
\be
\label{5}
\sum^N_{\alpha=1} |\Omega_\alpha|= |\Omega|
\ee
gives
\be\label{6}
\lambda_n|\Omega|\geq j^2\pi N.
\ee
According to Weyl's asymptotic law,

\be\label{7}
\frac{\lambda_n}n\to \frac {4\pi}{|\Omega|} \text { for $n\to\infty$}.
\ee

Combining (6), (7), we deduce \eqref{1}.

Recall that (3) is only saturated if $\Omega_\alpha$s are discs.
Since, roughly speaking, one would then expect most of these discs to be of size $O\big(\frac 1{\sqrt n}\big)$, bounds on the packing density of a family of discs
of comparable sizes (cf. Lemma 2.2) will improve upon the use of (5).

The results presented in Section 2 will permit to make the improvements discussed above quantitative.
\bigskip

\noindent{\bf Proposition.}
\be\label{8}
\underset {n\to\infty} {\hbox{$\lim\sup$}} \ \frac Nn<\Big(\frac 2j\Big)^2 -(3.\,  \ldots)10^{-9}.
\ee
\bigskip

The argument is given in Section 3.
Needless to say, one can likely do better, for instance by improving the constants in Lemma 2.1, which are certainly not
optimal.

Some of the author's original motivation for looking into Courant's nodal domain theorem lies in the predictions made
by Bogomolny and Schmit \cite{2}
on the mean value of $N =$ number of connected nodal domains, in the random wave function model (also conjectured to apply for quantum eigenfunctions of chaotic
manifolds) and based on percolation theory.
It is indeed interesting to note that in the prediction [2, inequality (5)], the ratio
\be \label{9}
\lim_{E\to \infty} \ \frac {\bar N(E)}{\bar n(E)} \approx 0.0624\ldots
\ee
is quite small.
From a rigorous point of view, it is not unreasonable to expect further improvements related to the application of the Faber-Krahn inequality for random models 
(the work of Nazarov and Sodin \cite {9}, while establishing a limiting distribution for $\frac Nn$ when $n\to\infty$ for random spherical harmonics (or,
equivalently, the random wave model),  apparently does not shed light on the actual value of the Bogomolny-Schmit constant.

The author was very recently informed of work of M.~Krishnapur \cite{7} on bounding the number of nodal domains of random plane waves, within the
Bogomolny-Schmit prediction by a factor fo 3,604... and also a numerical study by M.~Nastasescu \cite {8}.

\section
{Preliminaries}

The first ingredient is a refinement of the Faber-Krahn inequality which expresses the stability in a quantitative
form.
In 2D, there are several such results in the literature, in particular the paper [4] of Hansen and Nadirashvili and [0] 
due to Bhattacharya.
Theorem 4.1 in [4], valid for simply connected domains, has explicitly stated constants and our next Lemma 2.1 is an 
extension of it to the multiply connected case.

\begin{lemma}\label{Lemma1}
Let $G$ be a nonempty bounded domain in $\mathbb R^2$, which is finitely connected and denote $0<\gamma\leq \infty$ the
minimum area of a component of $\mathbb R^2\backslash G$. Then
\be\label{10}
\lambda_1(G) \geq \lambda_1 (G_0)\Big[1+\frac 1{250} \min\Big(1- \frac{r_i(G)}{r_0(G)}, 2\frac {\sqrt\gamma}{r_0(G)}\Big)^3\Big] 
\ee
with $G_0$ the disc of area $|G_0|=|G|$, $r_0(G)$ the radius of $G_0$ and $r_i(G)$ the inradius of $G$.
\end{lemma}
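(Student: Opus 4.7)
The plan is to extend Theorem 4.1 of \cite{4}, which handles the simply connected case via a quantitative isoperimetric deficit applied to the super-level sets of the first eigenfunction, to finitely connected domains. The statement involves the minimum of two quantities, so two regimes arise. Set $\alpha = 1 - r_i(G)/r_0(G)$ and $\beta = 2\sqrt{\gamma}/r_0(G)$.

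First I would pass to the simply connected fill $\tilde G$ of $G$, obtained by adjoining to $G$ every bounded component of $\mathbb{R}^2 \setminus G$, so that $\lambda_1(G) \geq \lambda_1(\tilde G)$ by domain monotonicity. Applying \cite{4} to $\tilde G$ yields
\begin{equation*}
\lambda_1(\tilde G) \;\geq\; \lambda_1(\tilde G_0)\Big[1+\tfrac{1}{250}\big(1-\tfrac{r_i(\tilde G)}{r_0(\tilde G)}\big)^3\Big].
\end{equation*}
In the $\alpha$-dominated regime ($\alpha \leq \beta$), one translates this back to $(r_i(G), r_0(G))$ using $r_0(\tilde G)^2 = r_0(G)^2 + A/\pi$, $r_i(\tilde G) \geq r_i(G)$, and $\lambda_1(\tilde G_0)/\lambda_1(G_0) = |G|/|\tilde G|$, where $A = |\tilde G|-|G|$ is the total hole area. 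The hypothesis $\alpha \leq \beta$ forces each hole to have area at least $\gamma \geq r_0(G)^2 \alpha^2/4$ and thus supplies the geometric control needed for the translation to succeed with the cubic exponent intact.

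In the $\beta$-dominated regime ($\beta \leq \alpha$), the ratio $|G|/|\tilde G|$ can be too small for the reduction above to be useful. Here I would argue directly with the first eigenfunction $\varphi$ of $G$ via the coarea representation
\begin{equation*}
\int_G |\nabla \varphi|^2 \;\geq\; \int_0^\infty \frac{\mathrm{Per}(\Omega_t)^2}{-\mu'(t)}\,dt, \qquad \Omega_t = \{\varphi>t\},\ \mu(t)=|\Omega_t|,
\end{equation*}
obtained by Cauchy-Schwarz on $\int_{\{\varphi=t\}} |\nabla\varphi|\cdot \int_{\{\varphi=t\}} |\nabla\varphi|^{-1} \geq \mathrm{Per}(\Omega_t)^2$. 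For $t$ in a range to be identified, the smallest hole of $G$ persists as a component of $\Omega_t^c$, so its outer boundary encloses area $\geq \mu(t)+\gamma$, yielding the quantitative isoperimetric improvement $\mathrm{Per}(\Omega_t)^2 \geq 4\pi(\mu(t)+\gamma)$. Inserting this and comparing with the radial extremal on $G_0$ provides an improvement of order $\gamma/r_0^2$, which in particular dominates the stated $(\sqrt{\gamma}/r_0)^3$ when $\sqrt{\gamma} \leq r_0$.

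The main technical hurdle is the execution of the direct argument in the second regime: pinning down a nondegenerate range of level values on which $\Omega_t$ still contains the smallest hole of $G$ as a complement component (which requires a Harnack-type lower bound for $\varphi$ on the inscribed disc), and then tracking the constant through the coarea comparison with the symmetric extremal so as to retain the $1/250$ inherited from \cite{4}. The first-regime bookkeeping is more routine but still requires care in combining the perturbations in $r_0$, $r_i$ and the normalization shift $\lambda_1(G_0) \mapsto \lambda_1(\tilde G_0)$ without losing the cubic exponent.
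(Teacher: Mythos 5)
Your second regime is broadly in the spirit of the paper's proof, but the first regime --- the reduction to the simply connected case by filling the holes --- contains a fatal flaw, so the proposal as a whole does not work. The chain $\lambda_1(G)\geq\lambda_1(\tilde G)\geq\lambda_1(\tilde G_0)\bigl[1+\tfrac1{250}\bigl(1-\tfrac{r_i(\tilde G)}{r_0(\tilde G)}\bigr)^3\bigr]$ cannot be translated back into the stated bound for $G$, for two reasons. First, the inradius deficit of $G$ may be created entirely by its holes and then disappears in $\tilde G$: take $G=\{R/2<|x|<R\}$. Here $\gamma=\pi R^2/4$, $r_0(G)=\tfrac{\sqrt3}{2}R$, $r_i(G)=R/4$, so $\alpha=1-\tfrac1{2\sqrt3}\approx0.71$ and $\beta=2\sqrt\gamma/r_0\approx2.05$, i.e.\ this is your $\alpha$-dominated regime; yet $\tilde G$ is the disc of radius $R$, so the deficit term for $\tilde G$ is zero and your chain yields only $\lambda_1(G)\geq\lambda_1(\tilde G_0)=\tfrac34\lambda_1(G_0)$, which is weaker than plain Faber--Krahn, let alone the target $\lambda_1(G_0)(1+\alpha^3/250)$. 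The inequality $r_i(\tilde G)\geq r_i(G)$ that you invoke points in the wrong direction: you would need an upper bound on $r_i(\tilde G)$, and none is available. Second, even when the deficit survives the filling, the normalization penalty kills the gain: $\lambda_1(\tilde G_0)=\lambda_1(G_0)\,|G|/|\tilde G|$ with $|\tilde G|-|G|\geq\gamma\geq \alpha^2 r_0(G)^2/4$ in your regime, a relative loss of order $\alpha^2$, which always dominates a gain of order $\alpha^3/250$. So the $\alpha\leq\beta$ hypothesis does not ``supply the geometric control needed''; it makes the loss worse.

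The paper does not split according to which branch of the minimum is active. It runs the level-set argument of Hansen--Nadirashvili once, for $G$ itself, based on a single per-level claim: for $\Omega=[u>t]$ one has $\sigma^2(u=t)\geq\sigma^2(u_0=t)+4\pi\min\bigl((r_0(\Omega)-r_i(\Omega))^2,\gamma\bigr)$, proved by a dichotomy on the largest component $\Omega_0$ of $\Omega$: if $\Omega_0$ is simply connected, Bonnesen's inequality gives the $(r_0-r_i)^2$ term; if not, the inner Jordan curve of $\Omega_0$ must enclose a component of $\mathbb R^2\setminus G$ (if the enclosed Jordan domain lay in $G$, positivity and the maximum principle for the first eigenfunction force $u\geq t$ there, which is then excluded via the eigenvalue equation), so the enclosed area is $\geq\gamma$ and the isoperimetric inequality gives the $\gamma$ term --- no Harnack-type bound or ``persistence range'' of levels is needed. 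The claim is then fed into the choice $\delta_1=\min\bigl(\tfrac12(1-r_i/r_0),\sqrt{\pi\gamma}\bigr)$, the level $s$ with $|[u>s]|=1-\delta_1$, and the two cases $s\geq\tfrac25\delta_1$, $s<\tfrac25\delta_1$ of \cite{4}, which is exactly the quantitative work you defer as ``the main technical hurdle.'' Note also that the gain of order $\gamma/r_0^2$ you assert in the $\beta$-dominated regime is stronger than what this machinery produces (the paper only gets order $\gamma^{3/2}/r_0^3$, which is why the $\sqrt{\pi\gamma}$ branch appears in $\delta_1$), and you give no argument for it.
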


\begin{proof}
Assume $|G|=1$, as we may by rescaling.

We follow \cite{4}, \S4.
Let $u>0$ on $G$, $u=0$ on $\partial G$ satisfy $\int_G u^2=1$ and $\Delta u+ \lambda_1(G) u=0$ and let
$u_0:G_0\to\mathbb R$ be the spherical rearrangement of $u$, i.e. $\vp(t)=|[u>t]|=|[u_0>t]|$, $0\leq t\leq
T=\max u(G)$.
Then $-\vp'(t) =\int_{\{u=t\}} |\partial_n u|^{-1} d\sigma$ and by H\"older's inequality
\be\label{11}
\lambda_1(G)=\int_G|\nabla u|^2 =\int^T_0 \Big(\int_{\{u=t\}}|\partial_n u|d\sigma\Big)
dt \geq \int_0^T \frac {\sigma^2(u=t)}{-\vp'(t)} dt.
\ee
Let $t>0$.
\medskip

\noindent
{\bf Claim.}
{\it Denote $\Omega=[u>t]$. Then
\be\label{12}
\sigma^2(u=t)\geq \sigma^2 (u_0=t)+4\pi\min \big(\big(r_0(\Omega)-              r_i(\Omega)\big)^2, \gamma\big).
\ee}

Assuming (12), define
\be\label{13}
\delta_1 =\min \Big(\frac 12 \Big( 1-\frac {r_i(G)}{r_0(G)}\Big), \sqrt{\pi\gamma}\Big)
\ee
and take $0<s<T$ such that
\be\label{14}
|[u>s]|=1-\delta_1 =(1-\delta_1)|G|.
\ee
For $0\leq t\leq s$, it follows that
$$
r_0 ([u>t])\geq r_0 ([u>s])\geq (1-\delta_1)^{\frac 12} r_0(G)> (1-\delta_1)    r_0(G)
$$
and since $r_i([u>t])\leq r_i (G)$, by (13)
$$
r_0([u>t])-r_i([u>t])\geq (1-\delta_1) r_0(G) -r_i(G)\geq \frac 12 \big(r_0(G)- r_i(G)\big).
$$
Thus, by (12), for $0<t\leq s$
\be\label{15}
\sigma^2(u=t)\geq \sigma^2(u_0=t)+\pi\min \big((r_0-r_i)^2, 4\gamma\big).
\ee
Substituting \eqref{15} in \eqref{11} gives
\begin{align}\label{16}
\eqref{11}&\geq \int_0^T\frac{\sigma^2 (u_0=t)}{-\vp'(t)} dt +\pi\min\big((r_0-r_i)^2, 4\gamma\big)\ \int_0^s \frac
1{-\vp'(t)} dt\nonumber\\
&=\int_{G_0}|\nabla u_0|^2 +\pi\min \big((r_0-r_i)^2, 4\gamma\big) \frac {s^2}{\delta_1}\nonumber\\
&\geq \lambda_1 (G_0)+4\delta_1 s^2
\end{align}
since $\int^s_0\frac 1{-\vp'} \geq \frac {s^2}{\int_0^s-\vp'} =\frac {s^2}{\delta_1}$ and (13).
\medskip

As in [4], we distinguish two cases.
\medskip

\noindent
{\bf Case 1.} $s\geq \frac 25 \delta_1$.

Then from \eqref{16} and since $\lambda_1(G_0)<20$
\be\label{17}
\lambda_1 (G)\geq \Big(1+\frac 4{125}+\delta^3_1\Big)\lambda_1 (G_0).
\ee

\noindent
{\bf Case 2.} $s<\frac 25 \delta_1$.

Write
$$
\begin{aligned}
\int_{[u>s]} (u-s)^2 &=1-\int_{[u\leq s]} u^2 -2s\int_{[u>s]} u+s^2|[u>s]|\\
&\geq 1-s^2 -2s> 1-\frac {22}{25} \delta_1.
\end{aligned}
$$
Also, using the Faber--Krahn inequality
$$
\frac{\int_{[u>s]}|\nabla u|^2}{\int_{[u>s]} (u-s)^2} \geq\lambda_1            ([u>s])\geq \lambda_1 ([u_0>s]) =
\frac {\lambda_1(G_0)}{|[u>s]|} =\frac {\lambda_1(G_0)}{1-\delta_1}
$$
implying
\be\label{18}
\lambda_1 (G)\geq \int_{[u>s]} |\nabla u|^2\gtrsim \frac {1-\frac {22}{25}     \delta_1}{1-\delta_1} \lambda_1
(G_0)>\Big(1+\frac 3{25}\delta_1\Big) \lambda_1(G_0).
\ee
Thus \eqref{17} holds in both cases, implying (10).
\medskip

\noindent
{\bf Proof of the claim}

Denote $\{\Omega_\alpha\}_{\alpha\geq 0}$ the connected components of $\Omega$ and assume $|\Omega_0|\geq
|\Omega_\alpha|$.
We distinguish two cases.

\medskip

\noindent
{\bf Case 1.} $\Omega_0$ is simply connected.

Applying Bonnesen's inequality and the isoperimetric inequality gives
\begin{align}\label{19}
\sigma^2(\partial \Omega_0)&\geq 4\pi |\Omega_0|+ 4\pi \big(r_0(\Omega_0)- r_i(\Omega_0)\big)^2
\\
\label{20}\sigma^2(\partial\Omega_\alpha) & \geq 4\pi|\Omega_\alpha| \ \text { for }     \alpha> 0.
\end{align}
Also
$$
\begin{aligned}
\sigma^2(\partial\Omega) &= \Big(\sigma(\partial\Omega_0)+\sum_{\alpha>0}      \sigma(\partial\Omega_\alpha)\Big)^2\\
&\geq \sigma^2(\partial \Omega_0)+\sum_{\alpha >0}
\sigma^2(\partial\Omega_\alpha)+2\sigma(\partial\Omega_0)
\sum_{\alpha>0} \sigma(\partial\Omega_\alpha)
\end{aligned}
$$
and by \eqref{19}, \eqref{20}
\be\label{21}
\geq 4\pi|\Omega|+ 4\pi \big(r_0(\Omega_0)-r_i(\Omega_0)\big)^2 +  8\pi\sum_{\alpha>0} |\Omega_\alpha|.
\ee
On the other hand
$$
\begin{aligned}
\big(r_0(\Omega) -r_i(\Omega)\big)^2&\leq \big(r_0(\Omega)-
r_i(\Omega_0)\big)^2 \leq \frac {|\Omega|}\pi + r_i(\Omega_0)^2-
2\frac {|\Omega_0|^{\frac 12}}{\sqrt\pi} r_i(\Omega_0)\\
&= \big(r_0(\Omega_0) - r_i(\Omega_0)\big)^2 +\frac 1\pi \sum_{\alpha
>0} |\Omega_\alpha|
\end{aligned}
$$
and therefore \eqref{21} $\geq 4\pi|\Omega|+4\pi \big(r_0(\Omega) -
r_i(\Omega)\big)^2$.

\medskip
                                                             
\noindent
{\bf Case 2.} $\Omega_0$ is not simply connected.

Then there are Jordan domains $D_1\subset D$ such that $\Omega_0\subset D\backslash D_1$, $\partial    D\cup\partial
D_1\subset
\partial\Omega_0$.
Thus $u=t>0$ on $\partial D\cup\partial D_1 \subset G$.
We claim that $D_1\backslash G\not=\phi$.
Otherwise $D_1\subset G$ and $[u<t]\cap D_1 \not=\phi$ (if $u\geq t$ on $D_1$ it would follow from the eigenvalue
equation that
$u$ vanishes at infinite order on $\partial D_1$).
But since $u=t$ on $\partial D_1$ and $u$ is the lowest eigenvalue eigenfunction of $G$, \break
$[u<t]\cap D_1=\phi$.
Hence $D_1\backslash G\not= \phi$ and $D_1$ contains a component of $\mathbb R^2\backslash G$.
Thus
\be\label{22}
|D_1|\geq \gamma
\ee
By the isoperimetric inequality again
$$
\begin{aligned}
\sigma^2(\partial\Omega_0)&\geq \sigma^2(\partial D)\geq 4\pi |D|\geq 4\pi |\Omega_0|+ 4\pi\gamma\\
\sigma^2(\partial\Omega_\alpha) &\geq 4\pi|\Omega_\alpha| \text { for } \alpha>0.
\end{aligned}
$$
Hence
$$
\sigma^2(\partial\Omega)\geq \sigma^2(\partial\Omega_0)+\sum_{\alpha>0}
\sigma^2(\partial\Omega_\alpha)\geq
4\pi|\Omega|+4\pi\gamma.
$$
This proves the claim and Lemma 2.1.
\end{proof}

The second ingredient concerns packing density of discs and appears in the paper Blind \cite{1}.

\begin{lemma}\label{Lemma2.2}
The packing density of a family of discs of radii $r_i$ such that
\be\label{23}
\inf \frac {r_i}{r_j}\geq p=0.74299\ldots
\ee
is at most $\frac\pi{\sqrt {12}}$.
\end{lemma}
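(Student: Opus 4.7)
The natural approach is a local density argument via Voronoi cells. Given a packing $\{D_i\}$ of discs $D_i$ with centers $x_i$ and radii $r_i$ satisfying $r_i/r_j\geq p$ for all $i,j$, I would assign to each $D_i$ its Laguerre (power) Voronoi cell
\be\label{voro}
V_i=\{x\in\mathbb R^2:|x-x_i|^2-r_i^2\leq |x-x_j|^2-r_j^2\text{ for all }j\};
\ee
since the discs are disjoint, each $V_i$ contains $D_i$, and the $V_i$ tile the plane. Consequently the density of the packing inside a large window equals, up to boundary terms of lower order, $\sum_i \pi r_i^2/\sum_i |V_i|$, and the global bound reduces to the pointwise local density estimate
\be\label{pack-loc}
|V_i|\geq 2\sqrt 3\, r_i^2
\ee
for every cell in the packing, which is equivalent to $\pi r_i^2/|V_i|\leq \pi/\sqrt{12}$.

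To prove \eqref{pack-loc}, I would decompose $V_i$ into angular sectors, one for each neighbor $D_j$ that shares an edge with $V_i$. Each sector has an explicit area formula in terms of $r_i$, the neighbor's radius $r_j$, the center-to-center distance $d_{ij}\geq r_i+r_j$, and the angular extent $\theta_{ij}$. Summing these areas over the cyclically ordered neighbors, subject to $\sum\theta_{ij}=2\pi$ and to the non-overlap constraints, turns \eqref{pack-loc} into a finite-dimensional constrained optimization problem for $|V_i|$. The regular hexagonal arrangement of six equal neighbors of radius $r_i$ at distance $2r_i$ is always a critical configuration and realizes equality in \eqref{pack-loc}; the task is to show that it is the global minimum.

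The main obstacle, and the place where the numerical threshold $p=0.74299\ldots$ enters, is ruling out the alternative in which a neighbor $D_j$ has $r_j<r_i$: shrinking $r_j$ allows its center to move closer to $x_i$, which in turn shortens the Laguerre edge between $V_i$ and $V_j$ and can reduce $|V_i|$. A convexity/monotonicity analysis of the sector area as a function of $(r_j,d_{ij})$ shows that this ``shrinking gain'' is dominated by the hexagonal lower bound as long as $r_j/r_i\geq p$, while it overtakes the hexagonal bound once the ratio drops below $p$. The value $p=0.74299\ldots$ is precisely the critical ratio at which the two effects balance, determined by an explicit one-variable calculation comparing the hexagonal sector area $\sqrt 3\, r_i^2/3$ with the minimal sector area achievable by introducing a single small neighbor. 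Once \eqref{pack-loc} is established, summing over all cells intersecting a ball of radius $R$ and letting $R\to\infty$ yields the density bound $\pi/\sqrt{12}$, the contribution from the $O(R)$ boundary cells being negligible in the limit.
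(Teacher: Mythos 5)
The paper does not prove this lemma at all: it is quoted verbatim from Blind's paper \cite{1}, so there is no in-paper argument to compare with, and any proof you give must carry the full weight of Blind's theorem. Your proposal reduces the global statement to the pointwise claim that every power (Laguerre) cell satisfies $|V_i|\geq 2\sqrt 3\,r_i^2$. That claim is false under the stated hypothesis, so the reduction collapses. Concretely, take a central disc of radius $1$ and surround it by $7$ tangent discs of radius $p=0.74299\ldots$ with centers equally spaced; consecutive neighbours have center distance $2(1+p)\sin(\pi/7)\approx 1.513>2p\approx 1.486$, so this is a legitimate packing configuration (and it can be extended to an infinite packing with radii in $[p,1]$ by discs placed far away, which do not affect the central cell). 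Since each neighbour is tangent to the central disc, each power line is the common tangent at the point of tangency, so the central cell is contained in the regular heptagon circumscribed about the unit disc, of area $7\tan(\pi/7)\approx 3.371<2\sqrt 3\approx 3.464$. The local density of that cell is at least $\pi/3.372\approx 0.932>\pi/\sqrt{12}\approx 0.907$. Thus no per-cell bound normalized by the disc's own radius can prove the lemma at the threshold ratio; the true statement only holds on average, with the excess density of such a cell compensated by the deficit of its neighbours' cells.

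This is precisely why the result is delicate: the entire content of the lemma is the global optimization that you dispose of in one sentence (``a convexity/monotonicity analysis shows\ldots''), and the critical value $p=0.74299\ldots$ does not come from the one-variable balance you describe (which, by the counterexample above, would in any case be comparing the wrong quantities). Blind's actual argument must handle configurations in which seven discs surround one, and the threshold $p$ is exactly the point below which such local excesses can no longer be compensated globally. So the proposal, as it stands, has a genuine gap at its central step; either one reproduces Blind's compensation analysis (or an equivalent weighted/averaged cell decomposition), or one simply cites \cite{1} as the paper does.
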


Note that $\frac \pi{\sqrt {12}}$ is the optimal packing density of congruent discs in the plane.

It turns out that only the precise form of \eqref{10} determines the improvement  obtained in inequality \eqref{8}.

\section
{Proof of the Proposition}

Following the argument and notation from Section 1, an application of \eqref {10} with $G=\Omega_\alpha$ gives
\be
\label{24}
\lambda_n|\Omega_\alpha|\geq j^2\pi \Big[ 1+\frac 1{250} 
\min \Big(1-\frac {r_i(\Omega_\alpha)}{r_0(\Omega_\alpha)}, 2 \frac {\sqrt \gamma}{r_0(\Omega_\alpha)}\Big)^3\Big]
\ee
instead of \eqref{4}, where $\gamma$ stands for the minimum area of a component of $\mathbb
R^2\backslash\Omega_\alpha$.

We introduce parameters $\delta>0$ and $\rho_+, \rho_- =p\rho_+$ ($p$ given by \eqref {22}), to be specified, and
partition the $\Omega_\alpha'$ in the following classes:

\begin{itemize}
\item [(I)] \ $r_i(\Omega_\alpha)\leq (1-\delta) r_0(\Omega_\alpha), r_0(\Omega_\alpha)\leq \rho_+$
\item [(II)] \ $r_0(\Omega_\alpha)>\rho_+$
\item [(III)] \ $r_i(\Omega_\alpha)> (1-\delta) r_0 (\Omega_\alpha) $ and $\rho_-\leq r_i(\Omega_\alpha)\leq\rho_+$
\item [(IV)] \ $ r_i(\Omega_\alpha)> (1-\delta)r_0(\Omega_\alpha)$ and $r_i(\Omega_\alpha)<\rho_-$
\end{itemize}

Let $N_I, N_{II} , N_{III}, N_{IV}$ be the respective number of those domains.

We first define $\rho_-$ in order to ensure that $N_{IV}=0$.

Since by \eqref{4}
\be\label{25}
\lambda_n \ r_0(\Omega_\alpha)^2 \geq j^2
\ee
and $r_0(\Omega_\alpha)< (1-\delta)^{-1}\rho_-$ for $\alpha $ class (IV), we ensure $N_{(IV)}=0$ by taking
\be\label{26}
\rho_- = j(1-\delta)\lambda_n^{-\frac 12 }\text { and } \rho_+ =\frac 1p j(1-\delta) \lambda_n^{-\frac 12}.
\ee

Note that a component of $\mathbb R^2\backslash \Omega_\alpha$ necessarily contains another nodal domain
$\Omega_\beta$ (unless of infinite area) and hence, by the Faber-Krahn inequality, is at least of area
$\gamma\geq \frac {j^2\pi}{\lambda_n}$.
Hence $2\frac {\sqrt \gamma}{r_0(\Omega_\alpha)} \geq \delta$ if $\Omega_\alpha$ is class (I) and therefore
$$
\lambda_n|\Omega_\alpha|\geq j^2\pi \Big(1+\frac 1{250} \delta^3\Big).
$$
Hence
\be\label{27}
\lambda_n \sum_{(I)} |\Omega_\alpha| \geq j^2\pi \Big(1+\frac 1{250} \delta^3\Big)N_I.
\ee
Clearly
\be\label{28}
\sum_{(II)} |\Omega_\alpha|\geq \pi\rho^2_+ N_{II}
\ee
and by \eqref{26}
\be\label{29}
\lambda_n\sum_{(II)} |\Omega_\alpha|\geq\pi \Big(\frac {j(1-\delta)}{p}\Big)^2 N_{II}.
\ee

Also, by \eqref{4}
\be\label{30}
\lambda_n\sum_{(III)} |\Omega_\alpha| \geq j^2\pi N_{III}.
\ee
Adding up \eqref{27}, \eqref {29}, \eqref {30}, we obtain, since $N=N_I+N_{II}+N_{III}$
\be\label {31}
\lambda_n|\Omega|\geq j^2 \pi\Big\{ N+\frac 1{250} \delta^3N_I +\Big(\Big(\frac {1-\delta}{p}\Big)^2 -1\Big)
N_{II}\Big\}.
\ee
Returning to \eqref{30}, write

\be\label{32}
\lambda_n (1-\delta)^{-2} \sum_{(III)} r_i(\Omega_\alpha)^2 \geq j^2  N_{III}.
\ee
We exploit Lemma \ref{Lemma2.2}. Each $\Omega_\alpha$ of class $(III)$ contains a disc $\mathcal D_\alpha$ of radius
$r_i(\Omega_\alpha)$ subject to the constraints $p\rho_+\leq r_i(\Omega_\alpha)\leq \rho_+$.
Considering the family $\{{\mathcal D}_\alpha;  \alpha \text { class (III)} \}$ of discs in $\Omega$, it follows from
Lemma \ref{Lemma2.2} that   in the limit for $n\to\infty$
\be\label{33}
\pi\sum_{(III)} r_i(\Omega_\alpha)^2 < \Big(\frac\pi{\sqrt {12}}+ o(1)\Big) \,|\Omega|.
\ee
One comment should be made at this point.
It is known that for $n\to\infty$ the number of corresponding nodal domains does not go necessarily to infinity
\cite{3}.
In particular, the class (III) could be empty, which of course does not violate the previous statement.

Combined with \eqref{32}, it follows that
\be\label{34}
\big(1+o(1)\big)\lambda_n(1-\delta)^{-2} \frac {|\Omega|}{\sqrt{12}} > j^2 N_{III} =j^2(N-N_I-N_{II}).
\ee
Restrict $0<\delta<1$ such that
\be\label{35}
\frac {\delta^3}{250} \leq \Big(\frac {1-\delta} p\Big)^2 -1.
\ee
If \eqref{35}, \eqref{31} implies
\be\label{36}
\lambda_n|\Omega| \geq j^2\pi \Big(N+\frac {\delta^3}{250} (N_I+N_{II})\Big).
\ee
Substituting in \eqref{34} gives the inequality
$$
\big(1+o(1)\big) \lambda_n(1-\delta)^{-2} \frac {|\Omega|}{\sqrt{12}}\geq j^2 (1+250 \delta^{-3})N-250 \delta^{-3} \,
\frac   {\lambda_n|\Omega|}{\pi} 
$$
and
\be\label{37}
N < \frac {(1+o(1))\lambda_n|\Omega|}{j^2(1+250 \delta^{-3})} \ \Big[ \frac 1{\sqrt{12}} (1-\delta)^{-2} +\frac
{250}\pi \delta ^{-3}\Big].
\ee
Invoking again the Weyl asymptotic \eqref{7}, \eqref{37} implies
$$
\overline{\lim_{n\to\infty}} \ \frac Nn \leq \Big(\frac 2j\Big)^2 (250+\delta^3)^{-1} \Big[250 +\frac {\pi}{\sqrt{12}}
\ \frac {\delta^3}{(1-   \delta)^2}\Big]
$$
and
\be\label{38}
\Big(\frac 2j\Big)^2 \underset{n\to\infty}{-\overline\lim} \  \frac Nn \geq \Big(\frac
2j\Big)^2 \frac {\delta^3}{250+\delta^3} \Big(1-       \frac\pi{\sqrt{12}} \ \frac
1{(1-\delta)^2}\Big).
\ee

It remains to optimize the r.h.s. of \eqref{38} in $\delta$,
which was performed using Mathematica software.
One verifies that since certainly $(1-\delta)^2 > \frac \pi{\sqrt{12}}$, inequality \eqref{35} is automatically
fulfilled.

The conclusion is as stated in \eqref{8}.
\medskip

{\bf Remark.}
Our result is also of some relevance to the theory of spectral minimal partitions (see \cite {5} for a survey).
In particular, it gives a slight improvement in the inequality $\mathcal L_k(\Omega)\geq k\frac {\pi j^2}{|\Omega|} $
(see [6, Proposition 6.1], where $\{\mathcal L_k\}_{k\geq 1}$ is the spectral minimal partition sequence.
\hfill $\square$
\medskip 

\noindent
{\bf Acknowledgements} 

The author is grateful to E.~Bourgain-Chang for carrying out the optimization problem at the
end
of the paper and an anonymous referee for various comments. 

Most importantly, he wishes to thank H.~Donnelly for pointing out an omission in an earlier version of this paper, that
led to Lemma 2.1 in this version and for several email correspondences.

This work was carried out at UC Berkeley and the author thanks the mathematics department for its hospitality.

\noindent
{\bf Funding}

Partially funded by NSF grants DMS-1301619 and DMS-0835373 (to J.B.).

\end{document}